\newtheorem{theorem}{Theorem}
\newtheorem{proposition}{Proposition}
\newtheorem{lemma}{Lemma}
\newtheorem{corollary}{Corollary}
\newtheorem*{remark}{Remark}
\newcommand{\E}{\mathbb{E}}
\newcommand{\reals}{\mathbb{R}}
\newcommand{\eps}{\varepsilon}
\DeclareMathOperator{\supp}{supp}
\DeclareMathOperator{\dist}{dist}
\title{Reconstructing the base field from imaginary multiplicative chaos}
\author{Juhan Aru}
\author{Janne Junnila}
\begin{document}

\renewcommand{\Re}{\operatorname{Re}}
\renewcommand{\Im}{\operatorname{Im}}

\maketitle

\begin{abstract}
  We show that the imaginary multiplicative chaos $\exp(i\beta \Gamma)$ determines the gradient of the underlying field $\Gamma$ for all log-correlated Gaussian fields with covariance of the form $-\log |x-y| + g(x,y)$ with mild regularity conditions on $g$, for all $d \geq 2$ and for all $\beta \in (0,\sqrt{d})$. In particular, we show that the 2D continuum zero boundary Gaussian free field is measurable w.r.t. its imaginary chaos.
\end{abstract}
\onehalfspacing

\section{Introduction}

\noindent In this paper $\Gamma$ denotes a log-correlated Gaussian field in $d \geq 2$ dimensions. Log-correlated fields, and in particular the 2D Gaussian free field have recently played an important role in describing the continuum limits of statistical physics models, in the study of SLE processes and in probabilistic constructions of 2D Liouville Quantum Gravity (see e.g. lecture notes \cite{B, PW}). Given such a log-correlated field, one can define the Gaussian multiplicative chaos (GMC) related to $\Gamma$ formally as the exponential $\exp(\gamma \Gamma)$. The study of GMC has also recently attracted wide interest due to its many connections and applications in, for example, random matrix theory, for modelling turbulence, and in the probabilistic study of Liouville field theory (see e.g. \cite{RW} for a review and numerous applications). 

In the current article, we concentrate on the case where $\gamma$ is imaginary, and for clarity we write it as $\exp(i\beta \Gamma)$. Such a random complex-valued distribution is called the imaginary multiplicative chaos and in 2D one could think of it as a random vector field. As log-correlated Gaussian fields are merely random distributions, this complex exponential has to be given mathematical sense through a regularization and renormalization procedure: we define the imaginary chaos $\mu = \mu_\beta$ as the limit of $\exp(i\beta \Gamma_\eps(\cdot) - \frac{\beta^2}{2}\E \Gamma_\eps(\cdot)^2)$ in the space of distributions, for some approximation sequence $\Gamma_\eps$ of the underlying field $\Gamma$. Imaginary multiplicative chaos has been shown to describe the continuum limit XOR-Ising model, to be related to the sine-Gordon model \cite{JSW} (see also \cite{RVSG} for recent related work), and it has also been used as a tool to study level sets of the 2D GFF \cite{SSV}.

The aim of this short note is to present an explicit way to reconstruct the underlying log-correlated field $\Gamma$ from its imaginary chaos in the whole region of its definition. Heuristically, we are given the continuum log-correlated field $\Gamma$ modulo $2\pi \beta^{-1}$ and we are interested in determining $\Gamma$ from this information. Whereas this would be trivial for a continuous function $\Gamma$, it is much less clear for a random distribution. In fact, as the multiplicative chaos is defined only through a regularization and renormalization procedure, such a reconstruction is non-trivial already for the real multiplicative chaos $\exp(\gamma \Gamma)$, with $\gamma \in \reals$; still, this case has been resolved in \cite{BSS}.

An even richer setting presents itself in the discrete: in \cite{GS} the authors consider $\exp(i \beta \Gamma_n)$ for a discrete 2D GFF (or in other words $\Gamma_n \mod 2\pi \beta^{-1}$) and prove an interesting threshold phenomena: they show that there exist $\beta_c^-, \beta_c^+$ such that for $\beta < \beta_c^-$ the underlying field $\Gamma_n$ can be reconstructed from $\exp(i \beta \Gamma_n)$ with high precision, and for $\beta > \beta_c^+$ such a reconstruction does not exist\footnote{This corresponds to a Kosterlitz-Thouless type of phase transition for the 2D discrete GFF.}. In \cite{GS} bounds on $\beta_c^-$ were also conjectured, and proved conditionally on the measurability of the underlying 2D GFF w.r.t. the imaginary chaos in the continuum (see Conjecture 1 and Proposition 6.1 of \cite{GS}). This note thus provides the missing part to confirm their Conjecture 1.~\\

In the case of real multiplicative chaos of the 2D GFF, to determine the underlying field, it is roughly speaking possible to just mollify the chaos, take the logarithm and show that after centering this field converges to the underlying GFF \cite{BSS} (see e.g. \cite[Section~5]{AR} for an exposition). In fact, such a procedure provides a local reconstruction of the underlying field -- in every neighbourhood, the real chaos determines its underlying field. In the case of imaginary chaos, however, such a local reconstruction is not possible: adding the constant $2\pi \beta^{-1}$ to the field locally is an absolutely continuous operation which does not change the chaos.

Yet, our main result shows that even though $\Gamma$ cannot be recovered locally from its imaginary chaos, one can recover the gradient of $\Gamma$. Presence of any \textit{global} condition such as boundary conditions then determines the underlying field $\Gamma$. In fact, our approach is rather general and works in all dimensions $d \ge 2$ and for all log-correlated fields whose covariance kernels are of the form
\begin{equation}\label{eq:cov}
  C(x,y) \coloneqq \log \frac{1}{|x-y|} + g(x,y),
\end{equation}
where the function $g \in C^2(U \times U) \cap L^1(U \times U)$ is bounded from above.\footnote{Since our main result is local in nature, it is mainly the $C^2$-condition that matters. The $L^1$-requirement and boundedness from above are imposed just to conform to the setting in \cite{JSW} in order to easily cite results concerning existence and properties of imaginary chaos.}

\begin{theorem}[Recovery of the gradient]\label{thm:main}
  Let $\Gamma$ be a log-correlated Gaussian field defined on an open simply connected domain $U \subset \reals^d$ {\rm (}$d \ge 2${\rm )} with covariance $C$ as in \eqref{eq:cov}. Then for any $\beta \in (0,\sqrt{d})$ the field $\nabla \Gamma$ (understood as a vector of random distributions) is locally recoverable from the imaginary chaos $\mu_\beta$. 
\end{theorem}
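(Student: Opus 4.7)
Formally $\nabla\Gamma = (i\beta)^{-1}\nabla\log\mu_\beta$, so the natural strategy is to regularize $\mu_\beta$, take a log-derivative and pass to the limit. Fix a smooth radial mollifier $\rho_\eps$ and set $\tilde\mu_\eps := \mu_\beta \ast \rho_\eps$; this is a smooth random function that is a Borel functional of the distribution $\mu_\beta$. The candidate reconstruction is
\[
V_\eps(x) \;:=\; \frac{1}{i\beta}\cdot\frac{\nabla\tilde\mu_\eps(x)}{\tilde\mu_\eps(x)} \;-\; b_\eps(x),
\]
defined on $\{\tilde\mu_\eps\neq 0\}$, where $b_\eps$ is a deterministic correction coming from the $x$-dependence of the renormalization constant $C_\eps(x,x) = -\log\eps + g(x,x) + o(1)$. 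The goal is to show that $V_\eps \to \nabla\Gamma$ in some negative Sobolev space $H^{-s}_{\mathrm{loc}}(U)$ in probability as $\eps\to 0$; since $V_\eps$ is $\sigma(\mu_\beta)$-measurable, this yields the theorem.

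\textbf{Key identity and limit procedure.} Work first at an auxiliary scale $\delta \ll \eps$ where $\mu_\beta^\delta$ is a smooth complex-valued function; differentiating the explicit formula gives $\nabla\mu_\beta^\delta = (i\beta\nabla\Gamma_\delta + c_\delta(x))\,\mu_\beta^\delta$ with $c_\delta$ bounded and deterministic. Mollifying and integrating by parts,
\[
\frac{\nabla\tilde\mu_\eps^\delta(x)}{\tilde\mu_\eps^\delta(x)} \;=\; i\beta\cdot\frac{[\nabla\Gamma_\delta\,\mu_\beta^\delta]\ast\rho_\eps(x)}{\mu_\beta^\delta\ast\rho_\eps(x)} \;+\; \frac{[c_\delta\,\mu_\beta^\delta]\ast\rho_\eps(x)}{\mu_\beta^\delta\ast\rho_\eps(x)}.
\]
The first term is $i\beta$ times a $\mu_\beta^\delta$-weighted local average of $\nabla\Gamma_\delta$ at scale $\eps$, and the second converges to a deterministic limit which is absorbed into $b_\eps$. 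I would first let $\delta\to 0$ using $L^2$-convergence of $\tilde\mu_\eps^\delta$ and $\nabla\tilde\mu_\eps^\delta$, and then $\eps\to 0$, controlling both passages by second-moment computations based on $\E[\mu_\beta(x)\overline{\mu_\beta(y)}] \propto |x-y|^{-\beta^2}e^{\beta^2 g(x,y)}$. The antisymmetry of the singular piece $\nabla_x\log|x-y|^{-1}$ under $(x,y)\leftrightarrow(y,x)$ combined with the symmetry of $\rho_\eps$ should cancel the leading singularities, leaving a remainder integrable throughout the regime $\beta \in (0,\sqrt d)$.

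\textbf{Main obstacle.} The chief difficulty is the division by $\tilde\mu_\eps$, which has a random zero set of codimension $2$ (isolated vortex points when $d=2$). To circumvent this I would replace $V_\eps$ by a tamed version
\[
\widehat V_\eps(x) \;:=\; \frac{1}{i\beta}\cdot\frac{\overline{\tilde\mu_\eps(x)}\,\nabla\tilde\mu_\eps(x)}{|\tilde\mu_\eps(x)|^2 + \kappa_\eps} \;-\; b_\eps(x),
\]
with $\kappa_\eps \to 0$ slowly, and show that the contribution to any test-function pairing coming from the bad set $\{|\tilde\mu_\eps| < \kappa_\eps\}$ vanishes in probability. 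This reduces to suitable anti-concentration estimates for $|\tilde\mu_\eps|$, roughly asserting that $|\tilde\mu_\eps|$ stays comparable to its $L^2$-scale on a set of nearly full measure. I expect this probabilistic control of vortices to be the most delicate step, with the hypothesis $\beta \in (0,\sqrt d)$ entering critically through the integrability of $|x-y|^{-\beta^2}$.
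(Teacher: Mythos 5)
Your starting heuristic is the same as the paper's ($\nabla\mu = i\beta\,\nabla\Gamma\,\mu$, so peel off the chaos factor), but the implementation has two genuine gaps. First, the division by $\tilde\mu_\eps$ is not just a technical nuisance to be tamed by a cutoff $\kappa_\eps$: it destroys the explicit moment calculus that makes the problem tractable. All known quantitative control of imaginary chaos comes from formulas for $\E\,\mu(x_1)\cdots\overline{\mu(y_k)}$, and a random denominator $|\tilde\mu_\eps|^2+\kappa_\eps$ (with $|\tilde\mu_\eps|$ typically of size $\eps^{-\beta^2/2}$ and a nontrivial near-zero set) is not accessible to these. You would need genuinely new anti-concentration estimates, uniform in $\eps$, and you give no route to them. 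The paper avoids division altogether: it pairs $\mu(x)$ with $\overline{\mu(u)}$ at a nearby point $u$ (with a mollifier supported on an annulus, so $x\neq u$), and renormalizes by the \emph{deterministic} factor $e^{-\beta^2 C(x,u)}$, which by a Girsanov computation makes $\E\,\mu(x)\overline{\mu(u)}e^{-\beta^2C(x,u)}$ explicitly computable. Concretely the estimator is $H_\eta=\int f(x)\mu(x)\overline{\mu(u)}e^{-\beta^2C(x,u)}\partial\varphi_\eta(x-u)\,dx\,du$, which is manifestly $\sigma(\mu)$-measurable and reduces everything to second-moment integrals. If you want a proof that closes, you should replace your quotient by such a product.

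Second, and more fundamentally, your single-scale limit $\eps\to 0$ fails in $d=2$, which is the main case of interest. The paper shows that in two dimensions a bounded but non-vanishing diagonal contribution survives in $\E|H_\eta|^2$: there is a resonance between the frequencies of $\overline{\mu}$ near scale $\eps$ and those of $\nabla\Gamma$ near scale $\eps$, and the single-scale approximations converge only in law, to $\nabla\Gamma$ \emph{plus an independent white noise}. This rules out convergence in probability of any single-scale estimator to $\nabla\Gamma$, so no choice of $\kappa_\eps$, $b_\eps$ or symmetry cancellation in your scheme can rescue it; the obstruction is intrinsic, not an artifact of the quotient. The paper's resolution is to observe that the resonant noise at scale $\eps$ is driven only by frequencies near $1/\eps$, so noises at widely separated scales are essentially independent; one then takes a Ces\`aro average $A_N=\frac1N\sum_{n\le N}H_{\eps_n}$ along $\eps_n=2^{-K^n}$ and uses a law-of-large-numbers effect (via the off-diagonal estimate $\E H_{\eta_1}\overline{H_{\eta_2}}=c_{\eta_1,\eta_2}+O(\eta_1^{1-\varepsilon}/\eta_2)$) to kill the noise. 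This multi-scale averaging is the key idea for $d=2$ and is entirely absent from your proposal; only for $d\ge 3$ does a direct $L^2$ argument at a single scale succeed.
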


Here, by locally recoverable we mean that
  for any $f \in C_c^\infty(U)$ and $k \in \{1,\dots,d\}$ the random variable $\langle \frac{\partial}{\partial x_k} \Gamma, f \rangle$ is measurable with respect to the random distribution $\mu$ restricted to any open neighbourhood of the support of $f$.

Now, for any smooth vector field $f \in C_c^\infty(U, \mathbb{R}^d)$, we have that $\langle \nabla \Gamma, f \rangle = - \langle \Gamma, \nabla \cdot f \rangle$. This defines $\Gamma$ up to a global additive constant. Thus, whenever the additive constant of the field is fixed, by for example prescribing boundary conditions or a zero mean condition, we can uniquely recover the whole field $\Gamma$. This holds in particular for the zero boundary Gaussian free field.

\begin{corollary}[Recovery of the field]
  Let $\Gamma$ be the zero boundary GFF in a simply connected domain $U \subset \reals^2$. Then for any $\beta \in (0,\sqrt{2})$ the field $\Gamma$ is measurable with respect to the random distribution $\mu$.
\end{corollary}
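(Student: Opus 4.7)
The plan is to combine Theorem~\ref{thm:main} with the zero boundary condition on $\Gamma$. The idea is that if we know $\nabla \Gamma$ and we know $\Gamma$ vanishes on $\partial U$, then $\Gamma$ should be reconstructible by ``integration against the Green's function''. Concretely, for any $f \in C_c^\infty(U)$, let $u$ be the solution of the Dirichlet problem $-\Delta u = f$ in $U$ with $u|_{\partial U} = 0$. Green's identity then suggests
\[
  \langle \Gamma, f\rangle = -\langle \Gamma, \Delta u\rangle = \int_U \nabla \Gamma \cdot \nabla u\,dx,
\]
with no boundary contribution since $u$ and $\Gamma$ both vanish on $\partial U$ (the latter in a distributional sense). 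Because the right-hand side involves only $\nabla \Gamma$, which is measurable with respect to $\mu$ by Theorem~\ref{thm:main}, we would deduce measurability of $\langle \Gamma, f\rangle$.

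To make this precise I would introduce cutoffs $\chi_n \in C_c^\infty(U)$ with $0 \le \chi_n \le 1$, $\chi_n \equiv 1$ on $\{d(\cdot,\partial U) \ge 1/n\}$ (and hence on $\supp(f)$ for $n$ large), and $|\nabla \chi_n| \le Cn$ supported in the shrinking tubular neighborhood $\{d(\cdot,\partial U) < 1/n\}$. Then $F_n := \chi_n \nabla u$ lies in $C_c^\infty(U, \reals^2)$, so by Theorem~\ref{thm:main} the random variable $\langle \nabla \Gamma, F_n\rangle$ is measurable with respect to $\mu$. Integration by parts against $\Gamma$ yields
\[
  \langle \nabla \Gamma, F_n\rangle = -\langle \Gamma, \nabla \cdot F_n\rangle = \langle \Gamma, \chi_n f\rangle - \langle \Gamma, \phi_n\rangle, \qquad \phi_n := \nabla \chi_n \cdot \nabla u,
\]
and since $\chi_n f = f$ for $n$ large, it suffices to show that the error $\langle \Gamma, \phi_n\rangle$ converges to $0$ in $L^2$.

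This last convergence is where the main work lies, and where Hardy's inequality enters. For the zero boundary GFF one has $\E\bigl|\langle \Gamma, \phi\rangle\bigr|^2 = \|\phi\|_{H^{-1}(U)}^2$, the dual norm with respect to the Dirichlet form on $H^1_0(U)$. For $v \in H^1_0(U)$, integration by parts combined with the identity $\int_U \nabla v \cdot \nabla u\,dx = \int_U v f\,dx$ (which holds since $-\Delta u = f$ and $v|_{\partial U} = 0$) yields
\[
  \int_U \phi_n v\,dx = \int_U (1-\chi_n)\nabla v \cdot \nabla u\,dx - \int_U (1-\chi_n) v f\,dx.
\]
Since $1 - \chi_n$ vanishes outside $\{d(\cdot,\partial U) < 1/n\}$, Hardy's inequality $\int_U v^2/d(\cdot,\partial U)^2\,dx \le C\int_U |\nabla v|^2\,dx$ (valid on bounded simply connected planar domains) bounds the second term by $(C/n)\|f\|_{L^2}\|\nabla v\|_{L^2}$, while Cauchy--Schwarz bounds the first by $\|(1-\chi_n)\nabla u\|_{L^2}\|\nabla v\|_{L^2}$, which tends to $0$ by dominated convergence since $\nabla u \in L^2(U)$. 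Taking the supremum over $v$ of unit Dirichlet norm gives $\|\phi_n\|_{H^{-1}(U)} \to 0$, concluding the argument.
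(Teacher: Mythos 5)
Your proof is correct and follows essentially the same route as the paper, which simply observes that $\langle \nabla\Gamma, F\rangle = -\langle \Gamma, \nabla\cdot F\rangle$ determines $\Gamma$ up to a global additive constant that is then fixed by the zero boundary condition; your Dirichlet-problem-plus-cutoff argument is a careful implementation of that second step, which the paper leaves implicit. One small remark: the appeal to Hardy's inequality is superfluous, since $\chi_n \equiv 1$ on $\supp f$ for large $n$ makes the term $\int_U (1-\chi_n) v f$ vanish identically, so only the Cauchy--Schwarz bound $\|(1-\chi_n)\nabla u\|_{L^2}\|\nabla v\|_{L^2} \to 0$ is actually needed.
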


As mentioned already, by proving the assumption H1 in Proposition 6.1 of \cite{GS}, this corollary confirms the Conjecture 1 of \cite{GS} on the lower bound on $\beta_c^-$, the threshold for statistical reconstruction of the underlying discrete 2D GFF.

\begin{remark}
One should note an important difference between the real and imaginary chaos, related to the fact that imaginary chaos does not determine the field locally. Namely, contrary to the case of the real chaos, the reconstruction of the base field in the case of imaginary chaos requires a certain level of continuity from the underlying field. 

This might be best illustrated via the example of multiplicative cascades. Indeed, real multiplicative cascades on $[0,1]$ are random measures whose density is formally given by $M_\gamma(x) := \prod_{x \in I} e^{\gamma X_I}$, where $I$ ranges over all dyadic intervals containing the point $x$, $\gamma \in \reals$ is a parameter and $X_I$ are, say, i.i.d. Gaussians with $\E e^{\gamma X_I} = 1$. The real cascade measure $M_\gamma(x)$ determines the underlying (almost) log-correlated field $A(x) := \sum_{x \in I} X_I$ for every small enough $\gamma$, e.g. by applying the argument of \cite{BSS}. However, in the case of imaginary cascades, i.e. when we replace $\gamma \to i\beta$ and consider random distributions of the form $M_{i\beta}(x) = \prod_{x \in I} e^{i \beta X_I}$, this is simply not true. Namely, one can always shift a single $X_I$ by $2\pi \beta^{-1}$ without changing $M_{i\beta}$. 

Interestingly, the chaos for the 2D Gaussian free field can be seen as a sort of generalized cascade \cite{APS, SSV}, yet shifting the field as above would be felt by this cascade. In fact, to our knowledge, this difference in measurability is one of the few phenomena where there is a notable difference between multiplicative cascades and multiplicative chaos measures. It is an interesting question to see what are the sharp conditions for Theorem \ref{thm:main} to hold, or in other words, to see the minimum level of continuity that allows to reconstruct a random field $\Gamma$ from knowing it modulo $2\pi \beta^{-1}$. 
\end{remark}

The key observation to our proof is the following simple heuristic: if you formally take the gradient of the imaginary chaos $\mu := \exp(i\beta \Gamma)$, then you obtain $i\beta \nabla \Gamma \exp(i \beta \Gamma)$. To recover $\nabla \Gamma$, we need to just multiply this by $(i\beta)^{-1}\exp(-i \beta \Gamma)$. This sets our strategy, but naturally as things are not defined pointwise, one will need to pass through a regularization and renormalization argument to make things work. In reality, we will consider $\nabla \mu$, that is well defined in the distributional sense, and try to recover $\nabla \Gamma$ by multiplying $\nabla \mu$ with $\bar \mu * \varphi_\eta$ for a family of smooth mollifiers. The aim is then to show that after renormalization we recover $\nabla \Gamma$ in the limit. In fact, judicious but not straight-forward choices of both definitions and computations make a quite short $L^2$-argument possible in dimensions $d \geq 3$. However, in $d = 2$ an interesting resonance phenomenon occurs, and we will need to add further averaging to tame down its effect.

For the moment, our approach does not extend to $d = 1$, and the reconstruction of the field from the chaos in 1D remains an open question. On the other hand, it is not difficult to see that Theorem 1 cannot hold in any $d \geq 1$, if we replace $\mu$ by either its real or imaginary part, or in other words the derivative of the underlying field cannot be locally recovered from only $\cos(\beta\Gamma)$ or only $\sin(\beta\Gamma)$. 
It is an interesting question to determine what information is retained in the real and imaginary part of the chaos. It would be equally interesting to see whether the base field might possibly be measurable just w.r.t. to the angle the angle of the chaos, i.e. the collection of random variables $\mu(f)/|\mu(f)|$ with $f \in C_c^\infty(U)$. 
~\\

\noindent \textbf{Acknowledgements}~\\

\noindent We have benefited from very inspiring and useful discussions on the topic with C. Garban, E. Saksman, A. Sep\'ulveda and C. Webb. We are also very thankful to them for their helpful comments on the manuscript. Both authors are members of NCCR Swissmap.

\section{Proof of Theorem~\ref{thm:main}}

\noindent Let us start with a few preliminaries. Throughout this section we will let $\varphi$ be a smooth non-negative mollifier supported in the annulus $B(0,1) \setminus B(0,1/2)$ and denote $\varphi_\eta(x) \coloneqq \eta^{-d} \varphi(x/\eta)$ for all $\eta \in (0,1)$. We also fix a bounded simply connected domain $U \subset \reals^d$.

By a log-correlated Gaussian field $\Gamma$ on $U$ we mean a centered Gaussian random distribution with the covariance structure
\[\E \langle \Gamma, f \rangle \langle \Gamma, g \rangle = \int f(x)g(y) C(x,y) \, dx \, dy,\]
where $C$ is of the form \eqref{eq:cov} and we extend $C$ as $0$ outside of $U \times U$. Such a process $\Gamma$ is not given by a function, but one may check that it makes sense as a random distribution in any negative order Sobolev space $H^{-\varepsilon}(\reals^d)$, see e.g. \cite[Proposition~2.3]{JSW}.

From $\Gamma$ we then construct the imaginary chaos distribution $\mu$ via the limiting procedure
\[\mu(x) = \lim_{\delta \to 0} e^{i\beta (\Gamma * \varphi_\delta)(x) + \frac{\beta^2}{2} \E (\Gamma * \varphi_\delta)(x)^2},\]
where the convergence takes place in $H^s(\reals^d)$ for $s < -d/2$ and a posteriori $\mu$ will belong to $H^s(\reals^d)$ for $s < -\beta^2/2$ \cite{JSW}. Again, $\mu$ will not be a function (or even a measure), but below we will nevertheless freely use the suggestive notation $\int f(x) \mu(x) \, dx$ for $\langle \mu, f\rangle$ with $f \in C_c^\infty(U)$.

As both the log-correlated field $\Gamma$ and the imaginary chaos $\mu$ are random distributions they have distributional (partial) derivatives, and hence e.g. $\frac{\partial}{\partial x_1} \Gamma$ can be seen as a distribution-valued centred Gaussian process. The covariance structure of each partial derivative is determined by, say,
\[ \E \langle \partial_1 \Gamma, f \rangle^2 = \int \partial_1 f(x) \partial_1 f(y) C(x,y) \, dx\, dy.\]
In the rest of this section we will first show how to recover $\nabla \Gamma$ when $d \ge 3$ and then treat the case $d = 2$, which requires some extra care.

\subsection{Measurability in dimensions three and above}\label{sec:highdimensions}

We start by fixing some notation for derivatives. We denote by $\partial \coloneqq \frac{\partial}{\partial x_k}$ the (distributional) partial derivative with respect to the $k$th coordinate, where we may without loss of generality assume that $k=1$. Below the notation $\partial_1h(x,y)$ and $\partial_2h(x,y)$ is used for functions $h(x,y)$ defined on $U \times U$ to refer to $\frac{\partial}{\partial x_1} h(x,y)$ and $\frac{\partial}{\partial y_1} h(x,y)$ respectively. 

Now, let $f \in C_c^\infty(U)$ be any fixed test function. For all $\eta \in (0, \dist(\supp f, \partial U)/2)$ we define the following random variables
\[H_\eta \coloneqq \int dx du f(x) \mu(x) \overline{\mu(u)} e^{-\beta^2 C(x,u)} \partial \varphi_\eta(x - u).\]
To make rigorous sense of this definition, one can, for example, interpret the above as applying the random distribution $(\mu \otimes \overline{\mu})(x,y)$ on $\reals^{2d}$ to the test function \[f(x)e^{-\beta^2C(x,y)}\partial\varphi_\eta(x-y).\] As $\varphi$ is supported on an annulus, this works directly when $g(x,y)$ is smooth, i.e. when $C(x,y)$ is smooth off the diagonal. In the case when $g(x,y)$ is merely $C^2(U \times U)$, one can approximate $C(x,y)$ first by kernels with $g(x,y)$ smooth, and then define $H_\eta$ as a limit in $L^2(\Omega)$, and hence in probability. This extra limiting procedure for non-smooth kernels can be done simultaneously for countable dense set of test functions $f$ and $\eta \in \mathbb{Q}$, and thus we can safely neglect its presence in what follows.

Since $H_\eta$ are measurable w.r.t. the chaos $\mu$, Theorem~\ref{thm:main} in the case $d \ge 3$ will follow from the following proposition.

\begin{proposition}\label{prop:dge3}
  For $d \ge 3$ we have $H_\eta \to -i\beta \langle \partial \Gamma, f \rangle$ in $L^2(\Omega)$.
\end{proposition}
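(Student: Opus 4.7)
The plan is a standard Hilbert-space / second-moment argument. Setting $Y \coloneqq -i\beta\langle\partial\Gamma, f\rangle$, I would establish the two limits
\[
\lim_{\eta,\eta'\to 0}\E[H_\eta\,\overline{H_{\eta'}}] \;=\; \lim_{\eta\to 0}\E[H_\eta\,\overline{Y}] \;=\; \E|Y|^2,
\]
from which $\E|H_\eta - Y|^2 \to 0$ follows by expanding the square.

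For the second moment I would first replace $\mu$ by its mollified approximation $\mu_\delta$ and then use the Wick-exponential formula, which for the four Wick exponentials with signs $(+,-,-,+)$ at $(x,u,x',u')$ expresses the expectation as the exponential of signed cross-covariances. The factors $e^{-\beta^2 C(x,u)}$ and $e^{-\beta^2 C(x',u')}$ appearing in $H_\eta\overline{H_{\eta'}}$ cancel the diagonal pieces of that exponential, and passing $\delta \to 0$ leaves
\[
\E[H_\eta\overline{H_{\eta'}}] \;=\; \iiiint f(x)f(x')\,e^{\beta^2 K}\,\partial\varphi_\eta(x-u)\,\partial\varphi_{\eta'}(x'-u')\,dx\,du\,dx'\,du',
\]
where $K(x,u;x',u') \coloneqq C(x,x') + C(u,u') - C(x,u') - C(u,x')$ is a mixed finite difference of $C$ in the pairs $(x\leftrightarrow u)$ and $(x'\leftrightarrow u')$. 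Expanding $e^{\beta^2 K} = 1 + \beta^2 K + R$, the $1$-contribution vanishes since $\int \partial\varphi_\eta(w)\,dw = 0$, and the linear term, via the Taylor expansion $K \approx \sum_{i,j}(x-u)_i(x'-u')_j\,\partial_{x_i}\partial_{x'_j}C(x,x')$ together with the identity $\int w_j\partial_k\varphi_\eta(w)\,dw = -\delta_{jk}$, converges to $\beta^2\iint f(x)f(x')\,\partial_{x_k}\partial_{x'_k}C(x,x')\,dx\,dx'$, which equals $\beta^2\iint \partial f(x)\partial f(x')\,C(x,x')\,dx\,dx' = \E|Y|^2$ after two further integrations by parts in $x$ and $x'$.

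For the cross moment $\E[H_\eta\overline Y] = -i\beta\,\E[H_\eta\langle\Gamma,\partial f\rangle]$ I would invoke Gaussian integration by parts: on the regularised level $\E[\Gamma(z)\mu_\delta(x)\overline{\mu_\delta(u)}] = i\beta(C_\delta(z,x) - C_\delta(z,u))\E[\mu_\delta(x)\overline{\mu_\delta(u)}]$, which passes to the limit as
\[
\E\bigl[\Gamma(z)\mu(x)\overline{\mu(u)}\,e^{-\beta^2 C(x,u)}\bigr] \;=\; i\beta\bigl(C(z,x) - C(z,u)\bigr).
\]
Substituting, setting $w = x-u$, and integrating by parts once in $w$ to move $\partial\varphi_\eta$ off the finite difference $C(z,x) - C(z,x-w)$ reduces everything to $\beta^2 \iint \partial f(x)\partial f(x')\,C(x,x')\,dx\,dx' = \E|Y|^2$ in the limit.

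The main obstacle will be controlling the remainder $R = e^{\beta^2 K} - 1 - \beta^2 K$, for which the elementary bound is $|R|\le \tfrac12\beta^4 K^2 e^{\beta^2|K|}$. On the bulk region where $|r|\coloneqq|x-x'|$ dominates $|w|$ and $|w'|$, Taylor expansion gives $|K|\lesssim |w||w'|/|r|^2$, so $R = O(K^2)$ and the resulting integral vanishes in the limit thanks to the extra powers of $w,w'$ supplied by $\partial\varphi_\eta\partial\varphi_{\eta'}$. The delicate regime is where $r$ approaches one of the four singular points $0$, $w$, $-w'$, $w-w'$ of $K$: there $e^{\beta^2|K|}$ develops a $(\mathrm{distance})^{-\beta^2}$ singularity, and its integrability in $\reals^d$ requires precisely the standing hypothesis $\beta^2 < d$. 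In $d \ge 3$, combining this integrability with the annular lower bounds $|w|,|w'|\gtrsim \eta,\eta'$ inherited from $\supp\varphi$ leaves enough dimensional room to make the remainder vanish via dominated convergence; it is exactly this step that fails at the marginal dimension $d=2$, where the extra averaging of the next subsection is needed.
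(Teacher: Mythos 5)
Your proposal is correct and follows essentially the same route as the paper: a second-moment argument reducing the claim to $\E |H_\eta|^2 \to \E|Y|^2$ and $\E H_\eta \overline{Y} \to \E|Y|^2$, with the cross term handled by the identical Girsanov identity $\E[\Gamma(z)\mu(x)\overline{\mu(u)}e^{-\beta^2 C(x,u)}] = i\beta(C(z,x)-C(z,u))$, and the second moment by isolating the diagonal (where the $O(\eta^{d-2})$ bound is precisely what survives at $d=2$) and extracting the main term from the second-order difference $K$ of the covariance. The only cosmetic difference is that you Taylor-expand $e^{\beta^2 K}$ and invoke the moment identity $\int w_j\,\partial_k\varphi_\eta(w)\,dw = -\delta_{jk}$, whereas the paper achieves the same cancellation by integrating by parts in the $u$-variable.
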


By expanding the square in $\E |H_\eta + i\beta \langle \partial \Gamma, f \rangle|^2$, Proposition~\ref{prop:dge3} is a consequence of the following two lemmas, the first one taking care of the cross-terms and the second handling $\E |H_\eta|^2$.

\begin{lemma}\label{lemma:crossterms}
  For $d \ge 2$ we have
  \[-i\beta\lim_{\eta \to 0} \E H_\eta \langle \partial \Gamma, f \rangle = -\beta^2\E \langle \partial \Gamma, f \rangle^2.\]
\end{lemma}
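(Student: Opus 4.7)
My plan is to compute $\E H_\eta \langle\partial\Gamma,f\rangle$ directly by Gaussian moments. Writing
\[
  \E H_\eta \langle \partial\Gamma, f \rangle = \int f(x)\,e^{-\beta^2 C(x,u)}\,\partial \varphi_\eta(x-u)\,\E\bigl[\mu(x)\overline{\mu(u)}\langle \partial\Gamma,f\rangle\bigr]\,dx\,du,
\]
the task reduces to identifying the three-point correlation inside, which I carry out at the level of the regularizations $\mu_\delta$ and then pass $\delta\to 0$.

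Set $V_\delta = \Gamma_\delta(x) - \Gamma_\delta(u)$ and $W = \langle\partial\Gamma, f\rangle$. Since $tW + i\beta V_\delta$ is a centered Gaussian, its moment generating function equals $\exp\!\bigl(\tfrac{t^2}{2}\E W^2 - \tfrac{\beta^2}{2}\E V_\delta^2 + ti\beta\,\E[WV_\delta]\bigr)$; differentiating at $t=0$ yields the Wick-type identity
\[
  \E\bigl[W e^{i\beta V_\delta}\bigr] = i\beta\, \E[WV_\delta]\, e^{-\frac{\beta^2}{2} \E V_\delta^2}.
\]
Together with the renormalization constant $\exp\bigl(\tfrac{\beta^2}{2}(\E \Gamma_\delta(x)^2 + \E \Gamma_\delta(u)^2)\bigr)$ relating $\mu_\delta(x)\overline{\mu_\delta(u)}$ to $e^{i\beta V_\delta}$, this produces a clean factor $e^{\beta^2 C_\delta(x,u)}$ which, in the $\delta\to 0$ limit, cancels against $e^{-\beta^2 C(x,u)}$ (this is permitted because $\partial\varphi_\eta$ is supported away from the diagonal, where $C_\delta \to C$ is smooth). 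Using $\langle\partial\Gamma, f\rangle = -\langle\Gamma,\partial f\rangle$ and integrating by parts in $y$,
\[
  \E[WV_\delta] = \int f(y)\bigl[(\partial_1 C(y,\cdot)*\varphi_\delta)(x) - (\partial_1 C(y,\cdot)*\varphi_\delta)(u)\bigr]\,dy \,\xrightarrow{\delta\to 0}\, \int f(y)\bigl[\partial_1 C(y,x) - \partial_1 C(y,u)\bigr]\,dy,
\]
the limit being well-defined since $\partial_1 C(y,\cdot)$ has only an integrable $|y-\cdot|^{-1}$ singularity in $d\ge 2$. Altogether,
\[
  \E H_\eta \langle\partial\Gamma,f\rangle = i\beta\iiint f(x)f(y)\bigl[\partial_1 C(y,x) - \partial_1 C(y,u)\bigr]\partial\varphi_\eta(x-u)\,dx\,du\,dy.
\]

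The piece containing $\partial_1 C(y,x)$ vanishes since $\int \partial\varphi_\eta(x-u)\,du = 0$. For the piece containing $\partial_1 C(y,u)$, I change variables $z = x-u$ and Taylor-expand $f(u+z) = f(u) + z\cdot\nabla f(u) + O(|z|^2)$; combined with $\int \partial\varphi_\eta = 0$ and $\int z_j\,\partial_{z_1}\varphi_\eta(z)\,dz = -\delta_{j1}$, the inner $z$-integral equals $-\partial f(u) + O(\eta)$. Hence
\[
  \lim_{\eta\to 0}\E H_\eta \langle\partial\Gamma,f\rangle = i\beta\iint \partial f(u)\,f(y)\,\partial_1 C(y,u)\,du\,dy = -i\beta\,\E\langle\partial\Gamma,f\rangle^2,
\]
where the last equality is one more integration by parts in $u$ together with the symmetry $\partial_2 C(y,u) = \partial_1 C(u,y)$ of $C$. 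Multiplying by the outer $-i\beta$ yields $-\beta^2\,\E\langle\partial\Gamma,f\rangle^2$, as desired.

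The main technical obstacle is controlling the Taylor remainder $O(\eta)$ uniformly against the singular kernel $\partial_1 C(y,u)$: one must check that $\iint |f(y)\partial_1 C(y,u)|\cdot O(\eta)\,du\,dy = O(\eta)$. This holds because $\partial_1 C$ is $L^1_{\mathrm{loc}}$ on the support of $f$ (the sole singularity being an integrable $|y-u|^{-1}$ one), and the $O(\eta)$ constant from the second-order Taylor remainder is uniform on compact sets. A secondary point is the validity of the Wick identity for the distribution-valued limit $\mu$, which is handled by the density of smooth-kernel approximations already built into the paper's definition of $H_\eta$.
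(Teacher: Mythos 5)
Your proof is correct and essentially the same as the paper's: the moment-generating-function identity for $(W,V_\delta)$ is exactly the paper's Girsanov computation of $\E\,\mu(x)\overline{\mu(u)}e^{-\beta^2 C(x,u)}\Gamma(y)$, and the reduction to $\iint f\,\partial f\,\partial_1 C$ is the same (the paper removes the $\eta$-singularity by integrating by parts in $u_1$ and invoking uniform integrability of $\partial_1 C(u,y)\lesssim |u-y|^{-1}$, while you Taylor-expand $f(u+z)$ against the moments of $\partial\varphi_\eta$ --- an equivalent device). The only slip is cosmetic: the final integration by parts that yields $-\E\langle\partial\Gamma,f\rangle^2$ is in $y$ (moving $\partial_{y_1}$ off $C(y,u)$ onto $f(y)$), not in $u$.
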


\begin{lemma}\label{lemma:diagonalheta}
  For $d \ge 3$ we have $\lim_{\eta \to 0} \E |H_\eta|^2 = \beta^2 \E \langle \partial \Gamma, f \rangle^2$. When $d=2$ we still have $\limsup_{\eta \to 0} \E |H_\eta|^2 < \infty$.
\end{lemma}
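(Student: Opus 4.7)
Expanding the square and applying the four-point correlation formula for imaginary chaos is the starting point. For the sign pattern $(+,-,-,+)$ corresponding to $\mu(x)\overline{\mu(u)}\overline{\mu(y)}\mu(v)$, this four-point function equals
\[\exp\bigl(\beta^2\bigl[C(x,u)+C(x,y)-C(x,v)-C(u,y)+C(u,v)+C(y,v)\bigr]\bigr),\]
and the normalization factors $e^{-\beta^2 C(x,u)}e^{-\beta^2 C(y,v)}$ built into $H_\eta\overline{H_\eta}$ exactly cancel two of these six terms. The change of variables $u = x - \eta s$, $v = y - \eta t$ turns $\partial\varphi_\eta(x-u)\partial\varphi_\eta(y-v)\,du\,dv$ into $\eta^{-2}(\partial\varphi)(s)(\partial\varphi)(t)\,ds\,dt$, yielding
\[\E|H_\eta|^2 = \eta^{-2}\iiiint f(x)\overline{f(y)}\,e^{\beta^2 K_\eta(x,y,s,t)}(\partial\varphi)(s)(\partial\varphi)(t)\,dx\,dy\,ds\,dt,\]
where
\[K_\eta(x,y,s,t) := C(x,y) + C(x-\eta s, y-\eta t) - C(x, y-\eta t) - C(x-\eta s, y)\]
is a second-order mixed difference of $C$. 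Two identities will drive the computation: $\int(\partial\varphi)(s)\,ds = 0$ (so any piece of the integrand constant in $s$ or $t$ contributes nothing), and $\int s_i(\partial\varphi)(s)\,ds = -\delta_{i,1}$ (by integration by parts, using $\int\varphi = 1$).

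Next I analyze $K_\eta$ in two regimes after splitting $C = -\log|\cdot-\cdot| + g$. The smooth part $g$ contributes a second difference of a $C^2$ function of step $\eta$, hence is $O(\eta^2)$ uniformly. For the log part, writing $r := |x-y|$ and $\hat e := (x-y)/r$ and Taylor-expanding $\log|r\hat e + \eta w|^2 = \log(r^2 + 2\eta r(\hat e \cdot w) + \eta^2|w|^2)$, one finds in the \emph{bulk regime} $r \gg \eta$ that
\[K_\eta(x,y,s,t) = \eta^2\sum_{i,j} s_i t_j\,\partial_{x_i}\partial_{y_j}C(x,y) + O(\eta^3/r^3),\]
while in the \emph{diagonal regime} $r \lesssim \eta$ a direct manipulation of the log terms gives $|K_\eta| \le \log(\eta/r) + O(1)$, hence $e^{\beta^2 K_\eta} \lesssim (\eta/r)^{\beta^2}$.

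Combining these estimates with the identity $\iint s_i t_j(\partial\varphi)(s)(\partial\varphi)(t)\,ds\,dt = \delta_{i,1}\delta_{j,1}$, the leading order of the integrand in the bulk converges pointwise to $\beta^2\partial_{x_1}\partial_{y_1}C(x,y)$. Higher-order contributions to $e^{\beta^2 K_\eta}$ in the bulk are bounded by $\eta^{-2}|K_\eta|^2 \lesssim \eta^2/r^4$, which after integration over $(x,y)$ with $r \ge \eta$ yields $O(\eta^{d-2})$. In the diagonal regime, the integrand is bounded by $\eta^{-2}(\eta/r)^{\beta^2}$, which integrates over $|x-y|\le\eta$ (with volume element $r^{d-1}\,dr$) to at most $O(\eta^{d-2}/(d-\beta^2))$; this is where the hypothesis $\beta < \sqrt{d}$ is crucial. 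For $d \ge 3$ both error contributions vanish and dominated convergence gives
\[\lim_{\eta\to0}\E|H_\eta|^2 = \beta^2\int f(x)\overline{f(y)}\,\partial_{x_1}\partial_{y_1}C(x,y)\,dx\,dy = \beta^2\,\E\langle\partial\Gamma,f\rangle^2,\]
the last equality by two integrations by parts (valid in $d\ge3$ since $\partial_{x_1}\partial_{y_1}C \sim r^{-2}$ is locally integrable and $g \in C^2$). For $d = 2$ the same bounds only yield the weaker boundedness $\limsup_{\eta\to 0}\E|H_\eta|^2 < \infty$, which is all that is claimed.

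The main obstacle will be the uniform control in the transition region $r \sim \eta$, and the production of a dominating function covering both regimes so that dominated convergence applies in $d \ge 3$. The diagonal estimate uses $\beta^2 < d$ in a sharp way, matching the standard $L^2$-integrability threshold for the imaginary chaos itself.
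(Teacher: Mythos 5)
Your proof is correct and follows essentially the same route as the paper: the same four-point expansion and cancellation of the normalizing factors, the same rescaling $u=x-\eta s$, $v=y-\eta t$, and the same split into a diagonal region $|x-y|\lesssim\eta$ (contributing $O(\eta^{d-2})$, with $\beta^2<d$ used exactly where you say) and a bulk region where the second-order difference $K_\eta=O(\eta^2/|x-y|^2)$ isolates the main term --- your use of the vanishing and first moments of $\partial\varphi$ is just the integration by parts in $u_1,v_1$ that the paper performs. One small imprecision: on the diagonal the pointwise bound $|K_\eta|\le \log(\eta/r)+O(1)$ fails near $s-t=(x-y)/\eta$, where $e^{\beta^2 K_\eta}$ carries an additional singular factor $|x-y-\eta s+\eta t|^{-\beta^2}$; this is harmless because that singularity is integrable in $(s,t)$ precisely when $\beta^2<d$, but it should be kept explicit (as the paper does) rather than absorbed into the $O(1)$.
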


\begin{proof}[Proof of Lemma~\ref{lemma:crossterms}]
  We start by noting that for any real $\gamma$ and $\delta > 0$ one has by Girsanov theorem that
  \begin{align*}
      & \E e^{\gamma \Gamma_\delta(x) - \gamma \Gamma_\delta(u) - \frac{\gamma^2}{2} \E \Gamma_\delta(x)^2 - \frac{\gamma^2}{2} \E \Gamma_\delta(u)^2} \Gamma_\delta(y) \\
      & = e^{-\gamma^2 \E \Gamma_\delta(x) \Gamma_\delta(u)} \E(\Gamma_\delta(y) + \gamma \E \Gamma_\delta(x) \Gamma_\delta(y) - \gamma \E \Gamma_\delta(u) \Gamma_\delta(y)) \\
      & = \gamma e^{-\gamma^2 \E \Gamma_\delta(x) \Gamma_\delta(u)} (\E \Gamma_\delta(x) \Gamma_\delta(y) - \E \Gamma_\delta(u) \Gamma_\delta(y)),
  \end{align*}
  for some approximation $\Gamma_\delta$ of $\Gamma$. Hence by letting $\delta \to 0$ and using the fact that the expression is analytic in $\gamma$, we can justify the formal computation
  \[\E \mu(x) \overline{\mu(u)} e^{-\beta^2 C(x,u)} = i\beta(C(x,y) - C(u,y)).\]
  Now, by definition $\langle \partial \Gamma, f \rangle = -\int \Gamma(y) \partial f(y)$, and thus we get
  \begin{equation*}
    i\beta\E H_\eta \int \Gamma(y) \partial f(y) = -\beta^2\int f(x) \partial f(y) (C(x,y) - C(u,y)) \partial \varphi_\eta(x-u) \nonumber,
  \end{equation*}
  which by integration by parts w.r.t. $u_1$ can be further written as 
  \[\beta^2\int f(x) \partial f(y) \partial_1 C(u,y) \varphi_\eta(x-u).\]
   Since $|\partial_1 C(\eta u, y)| \lesssim \frac{1}{|y - \eta u|}$, by uniform integrability we get that
  \begin{align*}
      & \lim_{\eta \to 0} \beta^2 \int f(x) \partial f(y) \partial_1 C(u,y) \varphi_\eta(x-u) = \beta^2\int f(x) \partial f(y) \partial_1 C(x,y) \\
      & = -\beta^2 \int \partial f(x) \partial f(y) C(x,y). 
  \end{align*}
  But the last expression equals exactly $ -\beta^2 \E \langle \partial \Gamma, f \rangle^2$, and hence we conclude.
\end{proof}

\begin{proof}[Proof of Lemma~\ref{lemma:diagonalheta}]
  Expanding $\E |H_\eta|^2$ we have
  \[\E |H_\eta|^2 = \int f(x)f(y) e^{\beta^2 C(x,y) + \beta^2 C(u,v) - \beta^2 C(x,v) - \beta^2 C(y,u)} \partial \varphi_\eta(x-u) \partial \varphi_\eta(y-v).\]
  By the change of variables $u \mapsto x-\eta u$, $v \mapsto y-\eta v$ we write this as
  \[\E |H_\eta|^2 = \eta^{-2} \int f(x)f(y) E(x,y,\eta u,\eta v) \partial \varphi(u) \partial \varphi(v),\]
  where
  \begin{equation}\label{eq:E}
    E(x,y,u,v) \coloneqq e^{\beta^2 C(x,y) + \beta^2 C(x-u,y-v) - \beta^2 C(x,y-v) - \beta^2 C(y,x-u)}.
  \end{equation}
  Let us consider separately the cases $|x-y| \le 10\eta$ and $|x-y| > 10\eta$.

  \medskip

  \noindent\textbf{The diagonal part $|x-y| \le 10 \eta$ is negligible:}

  \medskip

  We have
  \begin{align*}
    & \Big|\eta^{-2} \int_{|x-y| \le 10 \eta} f(x)f(y) E(x,y,\eta u,\eta v) \partial \varphi(u) \partial \varphi(v)\Big| \\
    & \lesssim \eta^{-2} \int_{|x-y| \le 10 \eta} f(x)f(y)\frac{|x-y+v|^{\beta^2} |x-y-u|^{\beta^2}}{|x-y|^{\beta^2} |x-y-u+v|^{\beta^2}} \\
    & = \eta^{d-2} \int_{|z| \le 10} f(y) f(y+\eta z) \frac{|z+v|^{\beta^2}|z-u|^{\beta^2}}{|z|^{\beta^2} |z-u+v|^{\beta^2}} \lesssim \eta^{d-2},
  \end{align*}
  and we see that the diagonal contribution vanishes when $d \ge 3$ and stays bounded when $d=2$.
  Above we did the change of variables $x = y + \eta z$ and used the fact that the $g$ term in the covariance \eqref{eq:cov} is locally bounded and the support of $f$ is compact.
                                                                                                              
  \medskip
  \noindent\textbf{The main part $|x-y| > 10\eta$:}
  \medskip

  As $|x-y| > 10\eta$ and $1/2 \le |u|, |v| \le 1$, there are no singularities or boundary contribution when we integrate by parts w.r.t. $u_1$ to get
  \[\beta^2 \eta^{-1} \int_{|x-y| > 10 \eta} f(x)f(y) E(x,y,\eta u, \eta v) (\partial_1 C(x-\eta u, y-\eta v) - \partial_1 C(x-\eta u, y)) \varphi(u) \partial \varphi(v).\]
  Note that by our constraint $|x-y| > 10\eta$ all the second derivatives of $C(x-\eta u, y- \eta v)$, $C(x-\eta u,y)$, and $C(x, y-\eta v)$ with respect to $\eta$ are bounded uniformly for $|u|,|v| \le 1$ by a constant times $1/|x-y|^2$. Thus we have that the second order difference
  \[C(x,y) + C(x-\eta u, y-\eta v) - C(x-\eta u,y) - C(x,y-\eta v) = O(\eta^2/|x-y|^2)\]
  and we conclude that $E(x,y,\eta u,\eta v) = 1 + O(\eta^2/|x-y|^2)$. Moreover (also under the constraint $|x-y| > 10\eta$) we have
  \[\partial_1 C(x-\eta u, y-\eta v) - \partial_1 C(x-\eta u, y) = O(1/|x-y|).\]
  Hence we see that the integral over to $O$-terms will be bounded in absolute value by a constant times
  \[\eta \int_{|x-y| > 10 \eta} \frac{1}{|x-y|^3} \lesssim \begin{cases} \eta \log(1/\eta), & \text{if } d = 3 \\ \eta^{d-2}, & \text{otherwise,} \end{cases}\]
  which goes to $0$ if $d \ge 3$ and stays again bounded if $d = 2$.

  It remains to consider
  \[\beta^2 \eta^{-1} \int_{|x-y| > 10 \eta} f(x)f(y) (\partial_1 C(x-\eta u, y-\eta v) - \partial_1 C(x-\eta u, y)) \varphi(u) \partial \varphi(v).\]
  The second term in the parenthesis will vanish when integrating by parts with respect to $v_1$. For the first term we may first do the change of variables $y = z + \eta v$ and then integrate by parts to get
  \[-\beta^2 \int_{|x-z-\eta v| > 10 \eta} f(x)\partial f(z + \eta v) \partial_1 C(x-\eta u, z) \varphi(u) \varphi(v),\]
  which as $\eta \to 0$ (again by uniform integrability) tends to
  \[-\beta^2 \int f(x) \partial f(z) \partial_1 C(x,z) = \beta^2 \int \partial f(x) \partial f(z) C(x,z) = \beta^2 \E \langle \partial \Gamma, f \rangle^2. \qedhere\]
\end{proof}

\subsection{Measurability in dimension two}

We saw in the proof of Lemma~\ref{lemma:diagonalheta} that when $d=2$, a bounded but non-vanishing diagonal contribution remains, killing any hope for a $L^2$-convergence in Proposition~\ref{prop:dge3}. This absence of $L^2$-convergence is an actual phenomenon and not just a limitation of the proof. To see where it stems from, it is illustrative to consider what happens when one tries to peel off the chaos from $\partial \mu(x)$ using just a martingale approximation $\overline{\mu_n(x)}$ normalized with $e^{-\beta^2 \E \Gamma_n(x)^2}$. Indeed, consider for example the zero boundary GFF in the unit square and let $\Gamma_n(x) = \sum_{k=1}^n X_k(x)$ be its Fourier expansion. We know that then $X_k$ are of the form $Y_{j,l}\sin(\pi j x_1)\sin(\pi l x_2)$ with $Y_{j,l}$ independent centred Gaussians. Set $\mu_n(x) = e^{i\beta \Gamma_n(x) + \frac{\beta^2}{2} \E \Gamma_n(x)^2}$. Then, up to ignoring some unimportant non-homogenity in space, we have
\begin{align*}
  \int f(x) \partial \mu(x) \overline{\mu_n(x)} e^{-\beta^2 \E \Gamma_n(x)^2} \, dx & \approx -i\beta \int f(x) \mu(x) \overline{\mu_n(x)} e^{-\beta^2 \E \Gamma_n(x)^2} \partial \Gamma(x) \, dx \\
  & \quad  - \int \partial f(x) \mu(x) \overline{\mu_n(x)} e^{-\beta^2 \E \Gamma_n(x)^2} \, dx.
\end{align*}
Now, the second term above tends to $-\int \partial f(x) \, dx = 0$, as it should, also when $d=2$. One would also expect the first term to tend to $-i\beta\langle \partial \Gamma,f \rangle$, but this is not true in $d = 2$ due to an interesting resonance phenomenon. Namely, if we denote by $\mu_n^T$ the chaos corresponding to $\Gamma - \Gamma_n$, the non-negligible contribution to the first term will come from the field $\mu_n^T(x)\partial \Gamma_n(x)$, where by construction the two factors are independent. Although this sequence will stay bounded in $L^2(\Omega)$, it will not converge to the main term $\partial \Gamma$, as one might naively expect. In fact, it does not converge at all in $L^2$, but rather it converges just in law to $\partial \Gamma$ plus an independent multiple of the white noise. This is basically due to the fact that $\mu_n^T$ remains rough on the scale $n$, giving rise to a resonance effect driven by frequencies near the scale $n$ in $\partial \Gamma_n$ and $\mu_n^T$. Whereas this might hint that our strategy has no hope, it is luckily not the case: namely, the resonance and the white noise contribution resulting from it, are due to only frequencies near $n$, and these resonating frequencies will be shifting with $n$. Thus, as the different frequency bands of $\Gamma$ are independent, the fields resulting from resonances are basically giving rise to independent noise fields for all $n$ that are sufficiently far apart. Thus one could hope to just average away these noise terms.\footnote{As a possible analogy, one might also think of the usage of Ces\`aro sums / Fej\'er kernel to tame down the Gibbs phenomenon at jump discontinuities.} More precisely, as $\mu_n^T$ is mean one, one could hope that if we take samples along some sequence $n_k$ which tends to $\infty$ very fast and consider the running average, a law of large numbers will enter and provide convergence of these averages to $-i\beta\int f(x) \partial \Gamma_n(x) \, dx$. We will show below that this plan of attack indeed works.

In order to implement the above strategy using our approximations $H_\eta$ from Section~\ref{sec:highdimensions}, let us define for all $N \ge 1$ the averages
\[A_N \coloneqq \frac{1}{N} \sum_{n=1}^N H_{\varepsilon_n},\]
where $\varepsilon_n = 2^{-K^n}$ for some large enough constant $K \ge 1$. We immediately see that in the formula
\[\E |A_N + i\beta \langle \partial \Gamma, f \rangle|^2 = \E |A_N|^2 - i\beta \E A_N \langle \partial \Gamma, f \rangle + i \beta \E \overline{A_N} \langle \partial \Gamma, f \rangle + \beta^2 \E \langle \partial \Gamma, f \rangle^2\]
the cross-terms tend by Lemma~\ref{lemma:crossterms} to $-\beta^2 \E \langle \partial \Gamma, f \rangle^2$ as $N \to \infty$.

Assume now that we can show that for $n > m$ we have 
$\E H_{\varepsilon_n} \overline{H_{\varepsilon_m}} = c_{n,m} + O(\varepsilon_n^\alpha\varepsilon_m^{-\beta})$ for some constants $\alpha,\beta > 0$, where $c_{n,m}$ is such that $c_{n,m} \to \beta^2 \E \langle \partial_1 \Gamma, g \rangle^2$ as $n,m \to \infty$. Then we have
  \[\E |A_N|^2 = \frac{1}{N^2} \sum_{n,m=1}^N \E H_{\varepsilon_n} \overline{H_{\varepsilon_m}} = \frac{1}{N^2} \sum_{n,m=1}^N c_{n,m} + \frac{1}{N^2} \sum_{n=1}^N \E |H_{\varepsilon_n}|^2 + \frac{1}{N^2} \sum_{1 \le m < n \le N}^N O(\varepsilon_n^{\alpha} \varepsilon_m^{-\beta}),\]
  where the first term now tends to $\beta^2 \E \langle \partial_1 \Gamma, g \rangle^2$ as $N \to \infty$, the second term goes to $0$ since the diagonal terms are bounded by Lemma~\ref{lemma:diagonalheta}, and by choosing $K$ large enough one sees that also the third term goes to $0$.

By the above discussion it is thus enough to prove the following lemma.

\begin{lemma}\label{lemma:Asmallbeta}
  Fix any $\varepsilon \in (0,1)$. For all $\eta_1 < \eta_2/100$ we have $\E H_{\eta_1} \overline{H_{\eta_2}} = c_{\eta_1,\eta_2} + O(\eta_1^{1-\varepsilon}/\eta_2),$
  where $c_{\eta_1,\eta_2} \to \beta^2 \E \langle \partial \Gamma, f \rangle^2$ as $\eta_1,\eta_2 \to 0$.
\end{lemma}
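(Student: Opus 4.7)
The starting point is the same Girsanov/analyticity computation as in Lemma~\ref{lemma:crossterms}, now applied to the four-point function of $\mu$: after the rescaling $u\mapsto x-\eta_1 u$, $v\mapsto y-\eta_2 v$ one obtains
\[
\E H_{\eta_1}\overline{H_{\eta_2}} = \eta_1^{-1}\eta_2^{-1}\int f(x)f(y)\, E(x,y,\eta_1 u,\eta_2 v)\,\partial\varphi(u)\partial\varphi(v)\,dx\,dy\,du\,dv,
\]
with $E$ as in \eqref{eq:E}. The natural candidate is
\[
c_{\eta_1,\eta_2}:=\beta^2\int f(x)f(y)\,\partial_1\partial_2 C(x-\eta_1 u,\,y-\eta_2 v)\,\varphi(u)\varphi(v)\,dx\,dy\,du\,dv.
\]
Two integrations by parts (first justified on a smooth approximation of $C$, then passed to the limit) rewrite $c_{\eta_1,\eta_2}$ as $\beta^2\int \partial f_{\eta_1}(x)\,\partial f_{\eta_2}(y)\,C(x,y)\,dx\,dy$, where $\partial f_\eta$ denotes the standard mollification of $\partial f$ at scale $\eta$. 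Standard $L^2$-mollifier convergence combined with the local integrability of $C$ then yields $c_{\eta_1,\eta_2}\to\beta^2\E\langle\partial\Gamma,f\rangle^2$.

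To control $\E H_{\eta_1}\overline{H_{\eta_2}}-c_{\eta_1,\eta_2}$, I would follow the split of Lemma~\ref{lemma:diagonalheta}, treating $|x-y|>R\eta_2$ and $|x-y|\le R\eta_2$ separately for a large fixed $R$. On the off-diagonal region $C$ is smooth on the whole support of $\varphi(u)\varphi(v)$, and integrating by parts twice in $u_1$ and $v_1$ gives
\[
\int_{|x-y|>R\eta_2}\!\!f(x)f(y)\,E\,\bigl[\beta^4 AB+\beta^2\partial_1\partial_2 C(x-\eta_1 u,y-\eta_2 v)\bigr]\varphi(u)\varphi(v),
\]
where $A=\partial_1 C(x-\eta_1 u,y-\eta_2 v)-\partial_1 C(x-\eta_1 u,y)$ and $B=\partial_2 C(x-\eta_1 u,y-\eta_2 v)-\partial_2 C(x,y-\eta_2 v)$ satisfy $|A|\lesssim\eta_2/|x-y|^2$, $|B|\lesssim\eta_1/|x-y|^2$, while $E=1+O(\eta_1\eta_2/|x-y|^2)$. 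Both the $\beta^4 EAB$-term and the $(E-1)$-part of the main term are then bounded by $\eta_1\eta_2\int_{|x-y|>R\eta_2}|x-y|^{-4}\lesssim\eta_1/\eta_2$ in $d=2$, well within the claimed error. What remains is exactly the restriction of $c_{\eta_1,\eta_2}$ to the off-diagonal region.

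On the diagonal region I would rescale $x=y+\eta_2 z$ with $|z|\le R$. The prefactor $\eta_1^{-1}\eta_2^{-1}$ times the volume $\eta_2^d$ gives $\eta_2^{d-1}/\eta_1=\eta_2/\eta_1$ in $d=2$, which is compensated by the observation that $E$ depends on $u$ only through $\rho u$ with $\rho:=\eta_1/\eta_2$, and $E|_{\rho=0}$ is independent of $u$. Combined with $\int\partial\varphi=0$, a Taylor expansion of $E$ in $\rho$ produces a factor $\rho$ that exactly cancels $\eta_2/\eta_1$ and leaves an $O(1)$ leading term. The analogous rescaling and expansion applied to the diagonal piece of $c_{\eta_1,\eta_2}$, in which the principal-value identity $\partial_1\partial_2(-\log|x-y|)=-\cos(2\theta)/|x-y|^2$ in two dimensions makes the borderline singularity meaningful, matches the two leading $O(1)$ contributions; the next-order residual is then $O(\eta_1^{1-\varepsilon}/\eta_2)$, the $\eta_1^{-\varepsilon}$-slack absorbing a logarithmic factor intrinsic to the 2D geometry.

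The main obstacle is precisely this diagonal analysis in dimension two: both the first $\rho$-derivative of $E$ and $\partial_1\partial_2 C$ sit at the critical $1/|z|^2$ scale, so matching the leading $O(1)$ diagonal contributions of $\E H_{\eta_1}\overline{H_{\eta_2}}$ and $c_{\eta_1,\eta_2}$ requires careful tracking of principal-value cancellations. This is exactly the 2D phenomenon absent in $d\ge 3$ (where the cleaner Lemma~\ref{lemma:diagonalheta} is enough) and is the source both of the $\varepsilon$-loss in the stated bound and of the need for the averaging procedure of this section.
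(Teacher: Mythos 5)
Your overall framework is sound and your off-diagonal analysis is correct: the candidate $c_{\eta_1,\eta_2}=\beta^2\int ff\,\partial_1\partial_2 C(x-\eta_1u,y-\eta_2v)\varphi(u)\varphi(v)$ is well defined (the singularity sits at $x-y=\eta_1u-\eta_2v$, which is bounded away from $0$ on $\supp\varphi\otimes\varphi$ since $\eta_1<\eta_2/100$ and $|v|\ge 1/2$), it does converge to $\beta^2\E\langle\partial\Gamma,f\rangle^2$, and on $|x-y|>R\eta_2$ the double integration by parts together with $|A|\lesssim\eta_2/|x-y|^2$, $|B|\lesssim\eta_1/|x-y|^2$, $E=1+O(\eta_1\eta_2/|x-y|^2)$ gives an error $\eta_1\eta_2\int_{|x-y|>R\eta_2}|x-y|^{-4}\lesssim\eta_1/\eta_2$, as you say. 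The genuine gap is the diagonal region $|x-y|\le R\eta_2$, which you correctly identify as "the main obstacle" but then only assert: the claim that after rescaling $x=y+\eta_2z$ the first-order term in $\rho=\eta_1/\eta_2$ of $E$ (tested against $\partial\varphi(u)$, using $\int\partial\varphi=0$) produces an $O(1)$ quantity that matches the diagonal piece of $c_{\eta_1,\eta_2}$ up to $O(\eta_1^{1-\varepsilon}/\eta_2)$ is precisely the content of the lemma on that region, and it is not established. It can in fact be made to work — expanding $\log E$ gives $\beta^2\rho\,u\cdot\bigl(\frac{z+v}{|z+v|^2}-\frac{z}{|z|^2}\bigr)+O(\rho^2/|z|^2)$, the $u$-integral against $\partial\varphi(u)$ extracts the first component, and one more integration by parts in $v$ reproduces the $-\cos(2\theta)/|\cdot|^2$ kernel of $c$; the $O(\rho^2/|z|^2)$ remainder and the inner regions $|z|\lesssim\rho$, $|z+v|\lesssim\rho$ contribute $O(\rho\log(1/\rho))$ — but none of this is in your write-up, and it is where all the 2D difficulty of the statement lives.

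It is worth knowing that the paper's proof avoids this diagonal matching entirely by a different decomposition. It excises only the small regions $|x-y|\le 10\eta_1$ and $|x-y+\eta_2v|\le 10\eta_1$ (scale $\eta_1$, not $\eta_2$), where a direct bound on $E$ gives $O(\eta_1/\eta_2)$. On the complement it integrates by parts \emph{once}, in $u_1$ only, and uses the asymmetric first-order estimate $E=1+O(\eta_1/|x-y|)+O(\eta_1/|x-y+\eta_2v|)$ — exploiting that only the $\eta_1$-increments of $C$ need to be expanded — so the error integrates to $O(\eta_1\log(1/\eta_1)/\eta_2)$ over the \emph{whole} remaining region. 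The surviving main term $\beta^2\eta_2^{-1}\int ff\,(\partial_1C(x-\eta_1u,y-\eta_2v)-\partial_1C(x-\eta_1u,y))\varphi(u)\partial\varphi(v)$ is then computed exactly: the second summand dies because $\int\partial\varphi(v)\,dv=0$, and the first becomes, after the substitution $y=z+\eta_2v$ and one integration by parts in $v_1$, the absolutely convergent quantity $-\beta^2\int f(x)\partial f(z+\eta_2v)\partial_1C(x-\eta_1u,z)\varphi(u)\varphi(v)$, which is taken as $c_{\eta_1,\eta_2}$. No two-fold integration by parts, no principal values, and no cancellation between two separately $O(1)$ diagonal contributions is ever needed. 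Your route buys a cleaner, more symmetric error off the diagonal, at the cost of concentrating the entire difficulty into a delicate near-diagonal cancellation that you have not carried out; the paper's route trades a slightly worse off-diagonal error ($\eta_1\log(1/\eta_1)/\eta_2$, still within $O(\eta_1^{1-\varepsilon}/\eta_2)$) for a main term that is an exact identity.
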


\begin{proof}
  We have after the change of variables $u \mapsto x - \eta_1 u$ and $v \mapsto y - \eta_2 v$ that
  \[\E H_{\eta_1} \overline{H_{\eta_2}} = \eta_1^{-1} \eta_2^{-1} \int f(x)f(y) E(x,y,\eta_1 u,\eta_2 v) \partial \varphi(u) \partial \varphi(v),\]
  where $E$ is given by \eqref{eq:E}. We will begin by showing that integrating close to the singularities of $E$ gives a contribution of order $\eta_1/\eta_2$.

  \medskip
  \noindent \textbf{The diagonal parts $|x-y| \le 10 \eta_1$ and $|x-y+\eta_2 v| \le 10\eta_1$ are negligible:}
  \medskip

  We note that the two parts are disjoint since $\eta_1 < \eta_2 / 100$ and $|v| \ge 1/2$. We have
  \begin{align*}
    & \eta_1^{-1} \eta_2^{-1} \Big|\int_{|x-y| \le \eta_1} f(x)f(y) E(x,y,\eta_1u,\eta_2v) \partial \varphi(u) \partial \varphi(v) \Big| \\
    & \lesssim \eta_1^{-1} \eta_2^{-1} \int_{|x-y| \le \eta_1} \frac{|x-y-\eta_1 u|^{\beta^2} |x-y+\eta_2 v|^{\beta^2}}{|x-y|^{\beta^2}|x-y-\eta_1 u+\eta_2 v|^{\beta^2}}.
  \end{align*}
  Note that $\frac{|x-y-\eta_1 u|^{\beta^2}}{|x-y|^{\beta^2}} = O(1)$ and similarly for $\frac{|x-y+\eta_2 v|^{\beta^2}}{|x-y-\eta_1 u + \eta_2 v|^{\beta^2}}$. Hence the whole integral is bounded by $\eta_1/\eta_2$. The case $|x-y+\eta_2v| \le 10 \eta_1$ can be handled in a similar fashion.

  \medskip

 \noindent  \textbf{The main part where $|x-y| > 10\eta_1$ and $|x-y+\eta_2 v| > 10\eta_1$:}
  \medskip

  We start by doing an integration by parts with respect to $u_1$. Due to our constraints there are no singularities or boundary terms, and we get
  \[\beta^2 \eta_2^{-1} \int f(x)f(y)E(x,y,\eta_1u,\eta_2v) (\partial_1 C(x-\eta u, y-\eta v) - \partial_1 C(x-\eta u, y)) \varphi(u) \partial \varphi(v).\]
  (We have omitted the constraints from under the integral sign for brevity.)
  Notice that
  \[C(x,y) - C(x-\eta_1 u, y) = O(\eta_1/|x-y|)\]
  and
  \[C(x-\eta_1 u, y - \eta_2 v) - C(x, y-\eta_2 v) = O(\eta_1/|x-y+\eta_2 v|),\]
  so that
  \[E(x,y,\eta_1 u, \eta_2 v) = 1 + O(\eta_1/|x-y|) + O(\eta_1/|x-y+\eta_2 v|).\]
  Let us consider the first $O$-term. Combined with the bound
  \[|\partial_1 C(x-\eta_1 u, y-\eta_2 v) - \partial_1 C(x-\eta_1 u, y)| \lesssim \frac{1}{|x-y+\eta_2v|} + \frac{1}{|x-y|}\]
  we can bound the integral in absolute value by a constant times
  \begin{align*}
    & \eta_2^{-1} \int_{|x-y| > 10\eta} \frac{\eta_1}{|x-y|^2} + \eta_2^{-1} \int_{\substack{|x-y| > 10\eta \\ |x-y+\eta_2v| > 10\eta}} \frac{\eta_1}{|x-y||x-y+\eta_2v|} \\
    & \lesssim \eta_1 \log(1/\eta_1) \eta_2^{-1} + \eta_1 \eta_2^{-1} \sqrt{\int_{|x-y| > 10\eta} \frac{1}{|x-y|^2}} \sqrt{\int_{|x-y+\eta_2 v| > 10\eta} \frac{1}{|x-y+\eta_2 v|^2}} \\
    & \lesssim \eta_1 \log(1/\eta_1) \eta_2^{-1}.
  \end{align*}
  The $O(\eta_1/|x-y+\eta_2 v|)$-term can be handled analogously.

  To finish the proof we note that the remaining term
  \[\beta^2 \eta_2^{-1} \int f(x)f(y) (\partial_1 C(x-\eta_1 u, y-\eta_2 v) - \partial_1 C(x-\eta_1 u, y)) \varphi(u) \partial \varphi(v)\]
  can be handled as in the end of the proof of Lemma~\ref{lemma:diagonalheta} and equals
  \[-\beta^2 \int f(x) \partial f(y + \eta_2 v) \partial_1 C(x-\eta_1 u, y) \varphi(u) \varphi(v).\]
  As before, the above tends to
  \[\beta^2 \int \partial f(x) \partial f(y) C(x,y) = \beta^2 \E \langle \partial \Gamma, f \rangle^2\]
  as $\eta_1,\eta_2 \to 0$.
\end{proof}


\begin{thebibliography}{99}
  \bibitem{AR} J. Aru:  Gaussian Multiplicative Chaos Through the Lens of the 2D Gaussian Free Field. Markov Process. Relat. Fields (2020), Volume 26, Issue 1.
  \bibitem{APS} J. Aru, E. Powell and A. Sepúlveda: Liouville measure as a multiplicative cascade via the level sets of the Gaussian free field. Ann. Inst. Fourier (2020), Volume 70, Issue 1.
  \bibitem{B} N. Berestycki, Introduction to the Gaussian Free Field and Liouville Quantum Gravity. Lecture notes (2016), available on authors webpage.
  \bibitem{BSS} N. Berestycki, S. Sheffield and X. Sun: Equivalence of Liouville measure and Gaussian free field. arXiv preprint (2014), arXiv:1410.5407.
  \bibitem{GS} C. Garban and A. Sep\'ulveda: Statistical reconstruction of the Gaussian free field and KT transition. arXiv preprint (2020), arXiv:2002.12284.
  \bibitem{JSW} J. Junnila, E. Saksman and C. Webb: Imaginary multiplicative chaos:  Moments, regularity and connections to the Ising model. arXiv preprint (2018), arXiv:1806.02118. To appear in Annals of Applied Probability.
  \bibitem{Kah}J. Kahane: Sur le chaos multiplicatif. Ann. Sci. Math. Québec, (1985), Volume 9, Issue 2.
  \bibitem{PW} E. Powell and W. Werner: Lecture notes on the Gaussian Free Field. arXiv preprint (2020), arXiv:2004.04720.
  \bibitem{RVSG} H. Lacoin, R. Rhodes, V. Vargas: A probabilistic approach of ultraviolet renormalisation in the boundary Sine-Gordon model. arXiv preprint (2019), arXiv:1903.01394.
  \bibitem{RW} R. Rhodes and V. Vargas: Gaussian multiplicative chaos and applications: A review. Probab. Surveys (2014), Volume 11.
  \bibitem{SSV} L. Schoug, A. Sep\'ulveda,  and F. Viklund: Dimension of two-valued sets via imaginary chaos. arXiv preprint (2019), arXiv:1910.09294.
\end{thebibliography}
\end{document}